\documentclass[11pt,reqno]{amsart}
\usepackage{amsmath, amsfonts, color, amsthm, graphics, amssymb,url}
\usepackage{stmaryrd}
\usepackage[notcite,notref,final]{showkeys}

\setlength\topmargin{0in}
\setlength\headheight{0in}
\setlength\headsep{0.3in}
\setlength\textheight{8.8in}
\setlength\textwidth{6.5in}
\setlength\oddsidemargin{0in}
\setlength\evensidemargin{0in}

\usepackage{cleveref}


\crefname{section}{Section}{Sections}
\crefformat{section}{#2Section~#1#3} 
\Crefformat{section}{#2Section~#1#3} 

\crefname{subsection}{}{Subsections}
\crefformat{subsection}{\S#2#1#3} 
\Crefformat{subsection}{\S#2#1#3}

\crefname{definition}{Definition}{Definitions}
\crefformat{definition}{#2Definition~#1#3} 
\Crefformat{definition}{#2Definition~#1#3} 

\crefname{example}{Example}{Examples}
\crefformat{example}{#2Example~#1#3} 
\Crefformat{example}{#2Example~#1#3} 

\crefname{examplenodiamond}{Example}{Examples}
\crefformat{examplenodiamond}{#2Example~#1#3} 
\Crefformat{examplenodiamond}{#2Example~#1#3} 

\crefname{remark}{Remark}{Remarks}
\crefformat{remark}{#2Remark~#1#3} 
\Crefformat{remark}{#2Remark~#1#3} 

\crefname{remarknodiamond}{Remark}{Remarks}
\crefformat{remarknodiamond}{#2Remark~#1#3} 
\Crefformat{remarknodiamond}{#2Remark~#1#3} 

\crefname{convention}{Convention}{Conventions}
\crefformat{convention}{#2Convention~#1#3} 
\Crefformat{convention}{#2Convention~#1#3} 

\crefname{lemma}{Lemma}{Lemmas}
\crefformat{lemma}{#2Lemma~#1#3} 
\Crefformat{lemma}{#2Lemma~#1#3} 

\crefname{definition-lemma}{Definition-Lemma}{Definition-Lemmas}
\crefformat{definition-lemma}{#2Definition-Lemma~#1#3} 
\Crefformat{definition-lemma}{#2Definition-Lemma~#1#3} 

\crefname{proposition}{Proposition}{Propositions}
\crefformat{proposition}{#2Proposition~#1#3} 
\Crefformat{proposition}{#2Proposition~#1#3} 

\crefname{corollary}{Corollary}{Corollaries}
\crefformat{corollary}{#2Corollary~#1#3} 
\Crefformat{corollary}{#2Corollary~#1#3} 

\crefname{theorem}{Theorem}{Theorems}
\crefformat{theorem}{#2Theorem~#1#3} 
\Crefformat{theorem}{#2Theorem~#1#3} 

\crefname{assumption}{Assumption}{Assumptions}
\crefformat{assumption}{#2Assumption~#1#3} 
\Crefformat{assumption}{#2Assumption~#1#3}

\crefname{notation}{Notation}{Notation}
\crefformat{notation}{#2Notation~#1#3} 
\Crefformat{notation}{#2Notation~#1#3}

\crefname{hypothesis}{Hypothesis}{Hypotheses}
\crefformat{hypothesis}{#2Hypothesis~#1#3} 
\Crefformat{hypothesis}{#2Hypothesis~#1#3}

\crefname{equation}{}{}
\crefformat{equation}{(#2#1#3)} 
\Crefformat{equation}{(#2#1#3)}

\crefname{align}{}{}
\crefformat{align}{(#2#1#3)} 
\Crefformat{align}{(#2#1#3)}

\crefname{proofstep}{Step}{Steps}
\crefformat{proofstep}{#2Step~#1#3} 
\Crefformat{proofstep}{#2Step~#1#3}

\usepackage{tikz}
\usetikzlibrary{arrows,calc,decorations.pathreplacing,decorations.markings,intersections,shapes.geometric,through,fit,shapes.symbols,positioning,decorations.pathmorphing}


\numberwithin{equation}{section}
 \setcounter{tocdepth}{2}
 \theoremstyle{plain}
\newtheorem{theorem}[equation]{Theorem}

\newtheorem{definition-prop}[equation]{Definition-Proposition}
\newtheorem{lemma}[equation]{Lemma}

\newtheorem{corollary}[equation]{Corollary}
\newtheorem{proposition}[equation]{Proposition}

\theoremstyle{definition}
\newtheorem{definition}[equation]{Definition}

\newtheorem{remark}[equation]{Remark}

\newtheorem{example}[equation]{Example}

\newcommand{\kk}{\Bbbk}

\newcommand{\red}{\textcolor{black}}

\newcommand\bC{\mathbb C}

\newcommand\bE{\mathbb E}
\newcommand\bF{\mathbb F}

\newcommand\bI{\mathbb I}

\newcommand\bN{\mathbb N}

\newcommand\bZ{\mathbb Z}

\newcommand\cB{\mathcal B}

\newcommand\cG{\mathcal G}

\newcommand\cO{\mathcal O}

\def\gk{{\rm GKdim\hspace{.02in} }}

\input xy
\xyoption{all}

\begin{document}

\title[GK-dimension of cosemisimple Hopf algebras]
{Gelfand-Kirillov dimension of cosemisimple Hopf algebras}

\author{Alexandru Chirvasitu}
\address{Department of Mathematics, University at Buffalo
Buffalo, NY 14260, USA}
\email{achirvas@buffalo.edu}

\author{Chelsea Walton}
\address{Department of Mathematics, The University of Illinois at Urbana-Champaign, Urbana, IL 61801,~USA}
\email{notlaw@illinois.edu}

\author{Xingting Wang}
\address{Department of Mathematics, Howard University, Washington, DC 20059, USA}
\email{\red{xingting.wang@howard.edu}}

\begin{abstract}
In this note, we compute the Gelfand-Kirillov dimension of cosemisimple Hopf algebras that arise as deformations of a linearly reductive algebraic group. Our work lies in a purely algebraic setting and generalizes results  of Goodearl-Zhang (2007), of Banica-Vergnioux (2009), and  of D'Andrea-Pinzari-Rossi (2017).
\end{abstract}
 
 \subjclass[2010]{16P90, 16T20, 20G42, 16T15}

\keywords{cosemisimple Hopf algebra, Gelfand-Kirillov dimension, Grothendieck semiring, linearly reductive algebraic group}

 \maketitle
 
 

\section{Introduction}\label{se.intro}

Let $\kk$ be an algebraically closed field and all algebraic structures in this note will be $\kk$-linear. Recall from \cite{kl} that the {\it Gelfand-Kirillov (GK-)dimension} of a finitely generated, unital algebra $A$ is the growth measure defined by
  \begin{equation*}
 \gk A =   \limsup_{n \to \infty} \frac{\log (\dim V^n)}{\log n},
  \end{equation*}
 where $V$ is a finite-dimensional generating subspace of $A$ containing $1_A$; this definition is independent of the choice of $V$ \cite[page~14]{kl}.
 
 \smallskip

 The typical techniques for computing the GK-dimension of an algebra involve \red{Gr\"{o}bner basis} methods or some other type of algebraic or representation-theoretic approach. But for the case that we examine here, i.e., when the algebra admits a well-behaved coalgebra structure, we can compute its GK-dimension using corepresentation-theoretic methods instead. Namely, given a finitely generated, cosemisimple Hopf algebra $H$ we consider the invariant $(R_+(H), d_H)$ used in~\cite{bv}; here, $R_+(H)$ is the Grothendieck semiring for the category of finite-dimensional $H$-comodules and $d_H$ is the dimension function on $R_+(H)$. We first establish that if $H$ is finitely generated, then its GK-dimension depends only on $(R_+(H), d_H)$ [Proposition~\ref{prop.dep}].  Then, our main result verifies that if $H$ is a deformation of a linearly reductive group $G$ in the sense that there is an isomorphism \red{between the two} pairs $(R_+(H), d_H)$ and $(R_+(G), d_G)$ [Definition~\ref{def.grng}, Remark~\ref{re.og}], then the GK-dimension of $H$ equals the dimension of $G$ as an algebraic variety [Theorem~\ref{th.main}].

\smallskip

Our results are related to the fact if $\phi: H \to H'$ is a morphism of cosemisimple Hopf algebras that induces a semiring isomorphism $\phi_+: R_+(H) \overset{\sim}{\longrightarrow}  R_+(H')$, then $\phi$ is a Hopf algebra isomorphism; see, e.g.,  \cite[Lemma~5.1]{Bichon:B(E)}. But we do not require such a map $\phi$ here to get $\gk H = \gk H'$; instead we just \red{require a map} $R_+(H) \overset{\sim}{\longrightarrow}  R_+(H')$ that preserves~dimension.

\smallskip

We introduce necessary terminology and verify the results mentioned above in Section~\ref{se.main}. Then, we discuss in Section~\ref{se.examples} how our main theorem compares to previous results on the growth of cosemisimple Hopf algebras  by Goodearl-Zhang \cite{gz}, by Banica-Vergnioux  \cite{bv}, and by D'Andrea-Pinzari-Rossi \cite{apr}. We also compute in Section~\ref{se.examples} the GK-dimensions of Dubois-Violette and Launer's and Mrozinski's universal  quantum groups \cite{dvl, mroz} using Theorem~\ref{th.main}.


\section{Terminology and Main Result}\label{se.main}

Throughout this section, let $H$ be a cosemisimple Hopf algebra. We write $\widehat{H}$ for the set of isomorphism classes $\{[S_\alpha]\}_\alpha$ of simple $H$-comodules $S_\alpha$; sometimes we abuse notation and write $\widehat{H}$ for the corresponding index set $\{\alpha\}$. Note that $\{[S_\alpha]\}_\alpha$ is a basis for the Grothendieck ring $R(H)$ as a free abelian group. The trivial $H$-comodule will be denoted by ${\bf 1}$.  We also write $\le_\oplus$ to denote \red{``direct summand of"}.
Now we discuss  the invariant $(R_+(H), d_H)$ mentioned in the Introduction.

\begin{definition}\label{def.grng}
  The {\it measured Grothendieck semiring} of $H$ is the pair $(R_+(H),d_H)$ where
  \begin{itemize}
  \item $R_+(H)$ is the Grothendieck semiring of the category of finite-dimensional $H$-comodules; and
  \item $d_H: R_+(H)\to \bN$ is the $\kk$-vector space dimension function on $R_+(H)$.    
  \end{itemize}
  We write $(R_+, d)$ if $H$ is understood. 
  
 If $H'$ is another cosemisimple Hopf algebra, then $(R_+(H),d_H)$ is {\it isomorphic} to $(R_+(H'),d_{H'})$ if there is a semiring isomorphism $f: R_+(H) \to R_+(H')$ so that $d_H([S_\alpha]) = d_{H'}(f([S_\alpha]))$ for all $[S_\alpha] \in \widehat{H}$.
\end{definition}

\begin{remark} \label{re.og} Recall that the Hopf algebra $\mathcal{O}(G)$ of regular functions on an affine algebraic group $G$ is cosemisimple if and only if $G$ is linearly reductive \cite[Section~4.6]{abe} which in turn is equivalent to $G$ being reductive in characteristic $0$ (see e.g. \cite[Appendix to Chapter 1]{fkm} and the references therein). We write $(R_+(G), d_G)$ for the measured Grothendieck semiring of $\mathcal{O}(G)$.
\end{remark}

Being cosemisimple, $H$ decomposes as the direct sum of its simple subcoalgebras. Because we are working over an algebraically closed field, the latter are matrix coalgebras necessarily of the form $M_\ell(\kk)^*$ for various ranks $0<\ell<\infty$. In short, we obtain a decomposition 
\begin{equation} \label{eq:decomp}
  H = \bigoplus_{\alpha\in \widehat{H}}C_{\alpha}, \quad \text{for } C_{\alpha}\cong(M_{\dim S_{\alpha}}(\kk))^*.
\end{equation}

Here, there are no multiple copies of any simple subcoalgebra in $H$ in the decomposition above; see, e.g., \cite[Theorem~11.2.13(v)]{ks}. If one chooses a  subspace $V$ of $H$ of the form
\begin{equation}\label{eq:d-alphas}
  V=\bigoplus_{\alpha\in F}C_{\alpha},\quad \text{for some finite subindex set } F \subseteq \widehat{H},  
\end{equation}
then $V^n\subseteq H$ has the following representation-theoretic characterization. 

\begin{lemma}\label{le.dn}

Take $V$ to be a subspace of $H$ as in \eqref{eq:d-alphas}. Then, 
  \begin{equation*}
    V^n = \bigoplus_{\beta \in F_n}C_{\beta}, \quad \text{where }
    F_n :=\left.\Bigg\{\beta\in \widehat{H}\ \right |\ S_{\beta}\le_{\oplus} \left(\bigoplus_{\alpha\in F}S_{\alpha}\right)^{\otimes n}\Bigg\}. 
  \end{equation*}
\end{lemma}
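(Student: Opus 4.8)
The plan is to reinterpret the subspaces $C_\alpha \subseteq H$ as \emph{coefficient coalgebras} of the simple comodules $S_\alpha$ and to exploit the fact that coefficient coalgebras are multiplicative under the tensor product of comodules. Recall that for a finite-dimensional right $H$-comodule $M$ with structure map $\rho_M : M \to M \otimes H$, one has its coefficient space $C(M) \subseteq H$, the image of the map $M^* \otimes M \to H$, $f \otimes m \mapsto (f \otimes \id)\rho_M(m)$; concretely, fixing a basis $(e_i)$ of $M$ and writing $\rho_M(e_j) = \sum_i e_i \otimes c_{ij}$, the space $C(M)$ is spanned by the matrix coefficients $c_{ij}$. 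For $M$ simple this recovers the simple subcoalgebra $C_\alpha$ of \eqref{eq:decomp}, and since $H$ is cosemisimple every $M$ splits as a direct sum of simples, so that $C(M) = \sum_{\beta \colon S_\beta \le_\oplus M} C_\beta$; by the directness of \eqref{eq:decomp} this sum is direct and depends only on which simple comodules occur in $M$ (multiplicities being irrelevant).

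Setting $W := \bigoplus_{\alpha \in F} S_\alpha$, the first step is the identification $V = C(W)$, which is immediate from \eqref{eq:d-alphas} together with the previous paragraph. The second and central step is the multiplicativity identity
\[
  C(M \otimes N) = C(M) \cdot C(N),
\]
where the right-hand side denotes the image of $C(M) \otimes C(N)$ under the multiplication of $H$. This is a direct computation: the comodule structure on $M \otimes N$ sends $m \otimes n \mapsto \sum m_{(0)} \otimes n_{(0)} \otimes m_{(1)} n_{(1)}$, so the matrix coefficients of $M\otimes N$ are exactly the products of those of $M$ and of $N$. Iterating this identity $n-1$ times yields $C(W^{\otimes n}) = C(W)^n = V^n$, where the middle term is the $n$-fold product in $H$.

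It then remains to decompose. Using cosemisimplicity, write $W^{\otimes n} \cong \bigoplus_\beta S_\beta^{\oplus m_\beta}$; combining the direct-sum formula for $C(\,\cdot\,)$ above with the definition of $F_n$ gives
\[
  V^n = C(W^{\otimes n}) = \bigoplus_{\beta \in F_n} C_\beta, \qquad F_n = \Big\{\beta \in \widehat H \ \Big|\ S_\beta \le_\oplus W^{\otimes n}\Big\},
\]
which is the asserted formula once $W^{\otimes n}$ is unwound as $\big(\bigoplus_{\alpha \in F} S_\alpha\big)^{\otimes n}$.

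I expect the main obstacle to be the careful verification of the multiplicativity identity $C(M \otimes N) = C(M)\cdot C(N)$ — in particular, checking that the product of coefficient spaces is \emph{exactly} the coefficient space of the tensor product, neither larger nor smaller, and that the resulting sum of simple subcoalgebras is genuinely direct (which is where cosemisimplicity and \eqref{eq:decomp} enter). The remaining bookkeeping, namely the induction on $n$ and the reindexing of $F_n$, is routine.
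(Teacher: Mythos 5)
Your proof is correct, and it turns on the same object as the paper's proof --- the coefficient space $C(M)$ of a finite-dimensional comodule, which the paper calls the \emph{support} --- but it is organized around a different key lemma. The paper never proves the general multiplicativity $C(M\otimes N)=C(M)\cdot C(N)$; it establishes only the special case it needs, $C(V^{\otimes n})=V^n$, by a structural argument: the multiplication $\mu_n\colon V^{\otimes n}\to V^n$ is a surjective comodule map (so $C(V^n)\subseteq C(V^{\otimes n})$), $V^n$ is a subcoalgebra of $H$ (so $C(V^n)=V^n$), and $\Delta(V)\subseteq V\otimes V$ (so $C(V^{\otimes n})\subseteq V^n$). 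It then needs a separate lemma --- proved via semisimplicity of the dual algebra $C^*$ and the coradical --- asserting that comodules with the same support have the same simple direct summands. Your route replaces both steps: the matrix-coefficient computation gives multiplicativity in full generality, and your formula $C(M)=\bigoplus_{S_\beta\le_\oplus M}C_\beta$, which follows at once from cosemisimplicity, additivity of $C(-)$ over direct sums, and $C(S_\beta)=C_\beta$, makes the support-determines-summands lemma unnecessary. What your version buys is a shorter, more modular argument with a reusable identity; what the paper's buys is independence from any choice of basis, staying entirely at the level of comodule and coalgebra maps. The one point you should make explicit is that $C(-)$ is an isomorphism invariant of comodules, since your final step computes $C(W^{\otimes n})$ from the abstract decomposition $W^{\otimes n}\cong\bigoplus_\beta S_\beta^{\oplus m_\beta}$; this is immediate from the minimality characterization of the support (or from your matrix-coefficient description), but it is doing real work there and deserves a sentence.
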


\begin{proof}
Our goal is to show that $V^n$ and $\left(\bigoplus_{\alpha \in F} S_\alpha\right)^{\otimes n}$ have the same simple $H$-comodule direct summands; we do so by showing that they are the same as those for the $H$-comodule $V^{\otimes n}$.

\smallskip

Towards the goal, we recall the notion of the support of an $H$-comodule. For any finite dimensional $H$-comodule $M$ with structure map $\rho: M\to M\otimes H$, we denote by $C(M)$ the unique minimal subspace of $H$ satisfying $\rho(M)\subset M\otimes C(M)$; we call $C=C(M)$ the {\it support} of the $H$-comodule $M$. By \cite[Theorem 3.2.11(c)]{Radford}, $C$ is a finite-dimensional subcoalgebra of $H$. Since $H$ is cosemisimple, so is $C$, and hence, $C^*$ is semisimple. This implies that $M$, as a module over $H^*$, is a direct sum of the simple modules appearing as direct summands of $H^*/\text{Ann}_{H^*}(M) \cong C^*$. By \cite[Corollary~3.2.6]{Radford}, there is a one-to-one correspondence between $C^*$-submodules of $M$ and $C$-subcomodules of $M$.  So, $M$ is the \red{direct sum of simple} $C$-comodules, i.e, $M$ is \red{isomorphic to} the coradical $C(M)_0$ of its support $C=C(M)$. Therefore, two $H$-comodules with the same support must have the same simple $H$-comodule direct summands.

\smallskip Now it suffices to show that the $H$-comodules $V$ and $\bigoplus_{\alpha \in F}  S_\alpha$, and the $H$-comodules $V^{\otimes n}$ and $V^n$, have the same support. The first statement holds by \eqref{eq:d-alphas} as $C_\alpha \cong (S_\alpha)^{\oplus \dim S_\alpha}$ as $H$-comodules. For the second statement, note the multiplication map $\mu_n:V^{\otimes n} \to V^n$ is a surjective $H$-comodule map, which induces an injective coalgebra map $C(V^n) \hookrightarrow C(V^{\otimes n})$. Since $V^n$ is a subcoalgebra of $H$, we have $C(V^n)=V^n$. Moreover, the $H$-comodule structure on $V^{\otimes n}$ is given by \red{the restriction of}
\[
\xymatrix{
H^{\otimes n}\ar[r]^-{\Delta_n} &H^{\otimes n}\otimes H^{\otimes n}\ar[r]^-{1\otimes \mu_n} &H^{\otimes n}\otimes H.
}
\]
Since $\Delta(V)\subset V\otimes V$, so we get $C(V^{\otimes n})\subseteq \mu_n(V^{\otimes n})=V^n$. Therefore, we have
$$C(V^{\otimes n})\subseteq \mu_n(V^{\otimes n})=V^n=C(V^n)\subseteq C(V^{\otimes n}).$$
This implies $V^{\otimes n}$ and $V^n$ share the same support.
\end{proof}

The result below is an immediate consequence of Lemma~\ref{le.dn} and \eqref{eq:decomp}. 

\begin{corollary}\label{cor.dim-dn}
Retaining the notation of Lemma~\ref{le.dn}.  For representatives $S_\beta$ of $[S_\beta] \in \widehat{H}$, we have that
  \begin{equation*}
    \dim V^n = \sum_{\beta\in F_n} (\dim S_{\beta})^2. 
  \end{equation*}

  \vspace{-.25in} 
  
  \qed
\end{corollary}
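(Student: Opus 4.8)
The plan is to combine the two results the author has already established, namely \Cref{le.dn} and the decomposition \eqref{eq:decomp}, so the proof is essentially bookkeeping. First I would invoke \Cref{le.dn} to write
\begin{equation*}
  V^n = \bigoplus_{\beta \in F_n} C_\beta.
\end{equation*}
Since this is a direct sum of $\kk$-vector spaces, its dimension is the sum of the dimensions of the summands, so $\dim V^n = \sum_{\beta \in F_n} \dim C_\beta$.

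Next I would compute $\dim C_\beta$ for a single index. By \eqref{eq:decomp}, each simple subcoalgebra is a matrix coalgebra $C_\beta \cong (M_{\dim S_\beta}(\kk))^*$. A matrix coalgebra $M_\ell(\kk)^*$ has the same dimension as the matrix algebra $M_\ell(\kk)$, namely $\ell^2$; here $\ell = \dim S_\beta$, so $\dim C_\beta = (\dim S_\beta)^2$. Substituting this into the sum gives
\begin{equation*}
  \dim V^n = \sum_{\beta \in F_n} \dim C_\beta = \sum_{\beta \in F_n} (\dim S_\beta)^2,
\end{equation*}
which is exactly the claimed formula.

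There is really no hard step here, since the corollary is labeled as an immediate consequence; the only point requiring a moment's care is confirming that $\dim C_\beta = (\dim S_\beta)^2$ rather than $\dim S_\beta$, i.e.\ remembering that a simple subcoalgebra of $H$ corresponding to a simple comodule $S_\beta$ is the full matrix coalgebra of rank $\dim S_\beta$ (equivalently, $C_\beta \cong (S_\beta)^{\oplus \dim S_\beta}$ as a comodule, a fact already used in the proof of \Cref{le.dn}), so that it contributes $(\dim S_\beta)^2$ to the total dimension and not merely $\dim S_\beta$. Once that identification is in hand the computation is a one-line application of additivity of dimension over direct sums.
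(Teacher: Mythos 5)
Your proof is correct and is exactly the argument the paper intends: it states the corollary as an immediate consequence of Lemma~\ref{le.dn} and \eqref{eq:decomp}, which is precisely what you combine, including the key observation that $\dim C_\beta = (\dim S_\beta)^2$ since $C_\beta$ is a matrix coalgebra of rank $\dim S_\beta$.
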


Now we obtain the following result.

\begin{proposition}\label{prop.dep}
  The Gelfand-Kirillov dimension of a finitely generated, cosemisimple Hopf algebra $H$ only depends on its measured Grothendieck semiring. 
\end{proposition}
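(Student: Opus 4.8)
The plan is to reformulate the statement as a comparison: if $H$ and $H'$ are finitely generated cosemisimple Hopf algebras and $f\colon (R_+(H),d_H)\to(R_+(H'),d_{H'})$ is an isomorphism of measured Grothendieck semirings, then $\gk H=\gk H'$. The guiding observation is that \Cref{cor.dim-dn} already expresses the growth of $H$ along a suitable generating subspace \emph{entirely} in terms of the data $(R_+(H),d_H)$, so I would transport a generating subspace of $H$ across $f$ to a generating subspace of $H'$ and show the two growth functions agree term by term.

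First I would arrange the generating subspace to have the special shape of \eqref{eq:d-alphas}. Since $H$ is finitely generated, any finite-dimensional generating subspace sits inside some $V=\bigoplus_{\alpha\in F}C_\alpha$ with $F\subseteq\widehat{H}$ finite; enlarging $F$ to contain the trivial class so that $1_H\in V$, this $V$ is itself a generating subspace of the required form, and $\gk H$ is independent of this choice. Next I would record two structural facts about $f$. The elements of $\widehat{H}$ are exactly the additively irreducible elements of $R_+(H)$ (the nonzero $x$ that admit no decomposition $x=y+z$ with $y,z$ both nonzero), a property preserved by any semiring isomorphism, so $f$ restricts to a bijection $\widehat{H}\to\widehat{H'}$. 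Likewise $f$ preserves the partial order underlying $\le_\oplus$, namely $a\le b$ iff $b-a\in R_+$, since this order is defined using addition alone. Setting $F'=f(F)$ and $V'=\bigoplus_{\alpha'\in F'}C'_{\alpha'}$, the fact that $f$ is a semiring homomorphism gives $f\big((\sum_{\alpha\in F}[S_\alpha])^n\big)=(\sum_{\alpha'\in F'}[S'_{\alpha'}])^n$, and combined with order-preservation this shows $f$ restricts to a bijection $F_n\to F'_n$ for every $n$, with $F_n,F'_n$ as in \Cref{le.dn}.

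With these in hand the conclusion is immediate. Using \Cref{le.dn} and $1_H\in V$, the subspace $V$ generates $H$ precisely when $\bigcup_n F_n=\widehat{H}$; since $f$ matches $F_n$ with $F'_n$ and $\widehat{H}$ with $\widehat{H'}$, the subspace $V'$ generates $H'$ as well. Finally, by \Cref{cor.dim-dn} and the defining relation $d_H=d_{H'}\circ f$,
\[
\dim V^n=\sum_{\beta\in F_n}(\dim S_\beta)^2=\sum_{\beta\in F_n}d_H([S_\beta])^2=\sum_{\beta'\in F'_n}d_{H'}([S'_{\beta'}])^2=\dim (V')^n
\]
for all $n$, so the two growth functions are literally the same and $\gk H=\gk H'$.

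The hard part will not be the final dimension count, which falls straight out of \Cref{cor.dim-dn}, but rather the bookkeeping that certifies the algebra-level input is semiring-intrinsic: namely packaging ``$V$ generates $H$'' as the purely combinatorial condition $\bigcup_n F_n=\widehat{H}$ via \Cref{le.dn}, and verifying that $f$ respects it. This rests on the small but essential structural point that a semiring isomorphism must carry simple classes to simple classes and preserve the direct-summand order; once that is established, transporting $F$, $F_n$, and the generation property along $f$ is routine.
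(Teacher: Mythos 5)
Your proof is correct and follows essentially the same route as the paper: choose a generating subspace of the form \eqref{eq:d-alphas} containing $1$, observe that the index sets $F_n$ of Lemma~\ref{le.dn} are characterized purely by the semiring structure (membership of $[S_\beta]$ as a summand of $\left(\sum_{\alpha\in F}[S_\alpha]\right)^n$), and conclude with Corollary~\ref{cor.dim-dn}. You are in fact more explicit than the paper on one point worth noting: you verify that a semiring isomorphism carries simples to simples and that the transported subspace $V'$ actually generates $H'$ (via $\bigcup_n F_n=\widehat{H}$), a step the paper's proof leaves implicit.
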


\begin{proof}
By the ``local finiteness" property of coalgebras \cite[Theorem 5.1.1]{MO93}, we can always choose a finite-dimensional subcoalgebra of $H$ so that it contains a \red{finite-dimensional} vector space $V$ that generates $H$ as an algebra and that $1\in V$ (i.e., the trivial $H$-comodule ${\bf 1}$ is a direct summand of $V$). Further, we can take $V$ of the form \eqref{eq:d-alphas}.

\smallskip

Note that the simple $H$-comodule \red{index set} $F_n$ of $V^n$ in Lemma~\ref{le.dn} is uniquely determined by the Grothendieck semiring structure of $H$; indeed, $\beta$ belongs to $F_n$ if and only if
  \begin{equation*}
    \left(\sum_{\alpha\in F}[S_{\alpha}]\right)^n= [S_{\beta}]+x
  \end{equation*}
  for some $x\in R_+$. The conclusion now follows from \Cref{cor.dim-dn} and the definition of GK-dimension since $\dim V^n$ can be calculated by using the measure $d_H$ on all of the $[S_\beta]$'s  that belong to $F_n$.
\end{proof}

We now apply the discussion above to Hopf algebras obtained as certain deformations of the Hopf algebra $\mathcal{O}(G)$ in Remark~\ref{re.og}.

\begin{definition}\label{def.qtm}
  Let $G$ be a linearly reductive algebraic group. A cosemisimple Hopf algebra $H$ is said to be a $G$-{\it deformation} if $R_+(H) \cong R_+(G)$. If, further, $(R_+(H), d_H) \cong (R_+(G), d_G)$, \red{then we will say that $H$ is a} {\it quantum function algebra on $G$}.
  \end{definition}
  
The main result of this note is given below.

\begin{theorem}\label{th.main}
  Let $G$ be a linearly  \red{reductive affine algebraic}  group and let $H$ be  a finitely generated,  quantum function algebra on $G$ in the sense of \Cref{def.qtm}. Then, we have that
  \begin{equation*}
    \gk H = \dim G,
  \end{equation*}
  where the latter is the dimension of $G$ as an algebraic variety. 
\end{theorem}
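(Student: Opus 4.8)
The plan is to reduce the theorem to a direct application of \Cref{prop.dep} together with the classical computation of the Gelfand--Kirillov dimension of a commutative coordinate ring. The essential point is that the hypothesis ``$H$ is a quantum function algebra on $G$'' supplies precisely the input that \Cref{prop.dep} consumes, namely an isomorphism $(R_+(H), d_H) \cong (R_+(G), d_G)$ of measured Grothendieck semirings. So the strategy is to transport the GK-dimension across this isomorphism and then compute it on the classical (commutative) side, where it is well understood.

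First I would check that $\mathcal{O}(G)$ itself satisfies the hypotheses of \Cref{prop.dep}. Since $G$ is a linearly reductive affine algebraic group, $\mathcal{O}(G)$ is a cosemisimple Hopf algebra by \Cref{re.og}, and it is finitely generated as an algebra because $G$ is affine. Thus $H$ and $\mathcal{O}(G)$ are both finitely generated cosemisimple Hopf algebras, and by the defining property of a quantum function algebra (\Cref{def.qtm}) their measured Grothendieck semirings are isomorphic. Since \Cref{prop.dep} asserts that the GK-dimension of such a Hopf algebra is a function of the pair $(R_+,d)$ alone, applying it to both $H$ and $\mathcal{O}(G)$ gives
\[
  \gk H = \gk \mathcal{O}(G).
\]

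It then remains to identify $\gk \mathcal{O}(G)$ with $\dim G$. Here I would invoke the standard fact that for a finitely generated commutative $\kk$-algebra the Gelfand--Kirillov dimension coincides with the Krull dimension \cite{kl}, together with the observation that the Krull dimension of the coordinate ring $\mathcal{O}(G)$ equals the dimension of $G$ as an algebraic variety. Chaining this with the previous display yields $\gk H = \dim G$, as desired.

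Because the substantive work has already been carried out in \Cref{le.dn} and \Cref{prop.dep}, I do not expect a serious obstacle inside the theorem itself; the proof is essentially a corollary. The only points requiring care are bookkeeping ones: confirming that $\mathcal{O}(G)$ is genuinely finitely generated and cosemisimple so that \Cref{prop.dep} applies to it as well as to $H$, and citing the commutative ``GK-dimension equals Krull dimension'' result in the correct generality. One should note in particular that this identity holds for \emph{all} finitely generated commutative algebras, not merely domains, which is exactly what is needed since $\mathcal{O}(G)$ need not be a domain when $G$ is disconnected.
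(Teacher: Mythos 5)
Your proposal is correct and follows essentially the same route as the paper's own proof: reduce to $\mathcal{O}(G)$ via \Cref{prop.dep}, then use the fact that GK-dimension equals Krull dimension for finitely generated commutative algebras (\cite[Theorem~4.5(a)]{kl}), which for $\mathcal{O}(G)$ is $\dim G$. Your additional bookkeeping remarks (cosemisimplicity of $\mathcal{O}(G)$ from \Cref{re.og}, finite generation from affineness, and validity of the Krull-dimension identity beyond domains) are sound and simply make explicit what the paper leaves implicit.
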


\begin{proof}
By \Cref{prop.dep}, it is enough to prove this for $H=\cO(G)$ itself. Recall from \cite[Theorem~4.5(a)]{kl} that the GK-dimension of a finitely generated, commutative algebra coincides with its classical Krull dimension. Moreover, $\cO(G)$ is finitely generated as $G$ is affine, and the classical Krull dimension of an algebra of regular functions on an algebraic variety is simply the dimension of that variety.
\end{proof}


\section{Examples and Previous Results}\label{se.examples}

In this section, we highlight special cases of Theorem~\ref{th.main}. We specialize to 
the case when $\kk$ has characteristic 0 throughout the section, since the requirement that an algebraic group be linearly reductive is rather strong in positive characteristic. Indeed, in characteristic $p>0$ the only linearly reductive affine algebraic groups are extensions of tori by finite groups of order coprime to~$p$; see~\cite{nag}.

\smallskip

The next few results take place in the same general setting, which we now recall briefly. 
Take:
\begin{itemize}
\item $G$,  a semisimple, connected, simply connected algebraic group; 
\item $\mathfrak{g}$, the Lie algebra of $G$; and
\item  $q$ \red{a nonzero scalar}. If $q$ is a root of unity of order $\ell$, then we assume that $\ell$ is odd, and  coprime to 3 when $\mathfrak{g}$ contains a $G_2$-component.
\end{itemize}
Recall that we can define a $q$-deformed version $U_q(\mathfrak{g})$ of the enveloping algebra $U(\mathfrak{g})$; see, e.g.,  \cite[Chapter~9]{cp}.
We refer the reader to \cite{dc-ly} and \cite[Sections~I.7 and~III.7]{bg} for \red{details on how to then define the Hopf subalgebra $\cO_q(G)$ of the Hopf dual $U_q(\mathfrak{g})^{\circ}$ in the $q$ a root of unity and in the $q$ generic cases}.

\begin{proposition} \label{prop.oqg}
Retain the hypotheses above. Then,  $\gk \cO_q(G) = \dim G$. 
\end{proposition}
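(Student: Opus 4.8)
The plan is to apply \Cref{th.main} directly when $q$ is generic, and to fall back on the structure theory of the root-of-unity coordinate ring otherwise; either way we reach $\gk\cO_q(G)=\dim G$. Suppose first that $q$ is not a root of unity. Here I would check that $\cO_q(G)$ is a finitely generated quantum function algebra on $G$ in the sense of \Cref{def.qtm}, so that \Cref{th.main} applies verbatim. Finite generation as an algebra is clear from the presentation of the quantized coordinate ring by matrix coefficients of a faithful representation. The crucial input is the representation theory of $U_q(\mathfrak g)$ at generic $q$: the category of finite-dimensional type-$1$ modules is semisimple, its simple objects $L_q(\lambda)$ are indexed by the dominant integral weights $\lambda$ exactly as the simple $G$-modules $L(\lambda)$ are, their dimensions agree through the common Weyl dimension formula, and the tensor-product rules coincide with those of $\mathrm{Rep}(G)$. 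As $\cO_q(G)$-comodules are precisely these modules, $\cO_q(G)$ is cosemisimple and $[L_q(\lambda)]\mapsto[L(\lambda)]$ defines a semiring isomorphism $R_+(\cO_q(G))\overset{\sim}{\longrightarrow} R_+(G)$ preserving dimension; that is, $(R_+(\cO_q(G)),d)\cong(R_+(G),d_G)$, and \Cref{th.main} yields $\gk\cO_q(G)=\dim G$.

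Now suppose $q$ is a root of unity of order $\ell$, subject to the stated parity and $G_2$ conditions. In this regime $\cO_q(G)$ is in general not cosemisimple, so I would argue directly rather than through \Cref{th.main}. By the theory developed in \cite{dc-ly} and \cite[Sections~I.7 and~III.7]{bg}, the quantum Frobenius map embeds a central Hopf subalgebra isomorphic to the classical coordinate ring $\cO(G)$ into $\cO_q(G)$, and $\cO_q(G)$ is a finitely generated module over this central copy of $\cO(G)$. Since the GK-dimension of an affine algebra that is module-finite over a central affine subalgebra equals that of the subalgebra (a standard property of GK-dimension; see \cite{kl}), we get $\gk\cO_q(G)=\gk\cO(G)$, and the latter equals $\dim G$ exactly as in the proof of \Cref{th.main}.

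The step I expect to be the main obstacle is the generic-case identification $(R_+(\cO_q(G)),d)\cong(R_+(G),d_G)$, which encodes the nontrivial fact that quantization changes neither the labeling, nor the dimensions, nor the fusion rules of the simple type-$1$ modules; with that dictionary in place the remaining bookkeeping is routine. A secondary point demanding care is the choice of conventions defining $\cO_q(G)$ as a Hopf subalgebra of $U_q(\mathfrak g)^{\circ}$, so that its comodules are genuinely the type-$1$ modules in the generic case and the Frobenius subalgebra is correctly identified with $\cO(G)$ in the root-of-unity case.
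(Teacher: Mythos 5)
Your proposal is correct and follows essentially the same route as the paper: for generic $q$ it verifies that $\cO_q(G)$ is a finitely generated cosemisimple quantum function algebra on $G$ (the paper cites \cite[Theorem~10.1.14, Proposition~10.1.16]{cp} for exactly the facts you sketch) and applies \Cref{th.main}, while at roots of unity it uses the De Concini--Lyubashenko/quantum Frobenius module-finiteness of $\cO_q(G)$ over a copy of $\cO(G)$ together with \cite[Proposition~5.5]{kl}. The only cosmetic difference is that the paper separates out the trivial $\ell=1$ case and cites references rather than re-deriving the generic representation-theoretic dictionary.
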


\begin{proof}

First, assume that $q$ is not a root of unity. 
Then, \cite[Theorem 10.1.14]{cp} implies that $\cO_q(G)$ is cosemisimple. Moreover, \cite[Proposition 10.1.16]{cp}  implies that $\cO_q(G)$ is a quantum function algebra on $G$. Thus, \Cref{th.main}  applies to obtain that $\gk \cO_q(G) = \dim G$.

\smallskip

Now suppose $q$ is a root of unity of order $\ell$ as above. The $\ell =1$ case holds by Remark~\ref{re.og}. Next, note that when $\ell>1$ is odd, we have that $\cO_q(G)$ is a finite module over a  subalgebra isomorphic to $\cO(G)$, by \cite[Proposition~6.4 and Theorem~7.2]{dc-ly} (see also \cite[Section~III.7.2]{bg}). So, by \cite[Proposition~5.5]{kl}, $\gk \cO_q(G) = \gk \cO(G)$, which is equal to $\dim G$ \red{as in the proof of Theorem~\ref{th.main}}.
\end{proof}

\begin{remark}\label{rmk.gz}
The case when  $q\in \bC^{\times}$ is transcendental over $\mathbb{Q}$ recovers a result of Goodearl-Zhang~\cite{gz}. Namely, \cite[Theorem~0.1]{gz} shows that $\cO_q(G)$ is Auslander-regular and Cohen-Macaulay, and its GK-dimension is as in \Cref{th.main} (and also equal to its global dimension).  
\end{remark}

The next three remarks pertain to connected, simply connected, compact real Lie groups $G$.

\begin{remark}\label{rmk.bv}
A weight-theory-based argument appears in work of Banica-Vergnioux \cite{bv} for a particular case of \Cref{th.main}: Namely, \cite[Theorem~2.1]{bv} computes the GK-dimension for algebras of regular functions on connected, simply connected, compact, real Lie groups. This is done in unitary language, working with the maximal compact subgroups of such linear algebraic groups.
\end{remark}

\begin{remark}\label{rmk.apr}
The result in \Cref{rmk.bv} was then extended by different methods,  close in spirit to what we achieve here, to representative functions on arbitrary compact groups (i.e. regular functions on classical reductive groups) in work of D'Andrea-Pinzari-Rossi \cite[Corollary 3.5]{apr}. 
\end{remark}

\begin{remark}
In the real, unitary setting, Proposition~\ref{prop.oqg} also recovers the main result of \cite{cs}. This is proved under the additional assumption that $q$ is positive and $<1$ in order to make use of an appropriate $*$-structure on $\cO_q(G)$. 
\end{remark}

We also emphasize that Theorem~\ref{th.main} can be applied to $G$-deformations more general that those arising in Proposition~\ref{prop.oqg}, such as cosemisimple multi-parameter deformations of $\mathcal{O}(G)$. For instance, in the cosemisimple case, Takeuchi's two-parameter deformations of $GL(2)$ \cite{tak} are a subclass of the quantum groups discussed in Example~\ref{ex.mroz} below.

\smallskip

Now we turn our attention to the growth of the $SL(2)$-deformations and the $GL(2)$-deformations studied in \cite{Bichon:B(E)} and \cite{mroz}, respectively. To begin, we need the result below.

\begin{lemma}\label{le.dim-ineq}
Let $H$ and $K$ be two finitely generated, cosemisimple Hopf algebras, and suppose that there exists an isomorphism  $f: R_+(H) \to R_+(K)$ between their Grothendieck semirings. If there is a class $[X]\in R_+(H)$ such that $\dim X>\dim f(X)$, then $\gk H = \infty$. 
\end{lemma}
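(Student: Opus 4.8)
The plan is to exploit the dimension function's compatibility with the semiring structure to manufacture arbitrarily large vector-space dimensions at a fixed combinatorial "length", thereby forcing superpolynomial growth.

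\medskip

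\noindent\textbf{Setup and key idea.} First I would fix a finite-dimensional generating subspace $V \subseteq H$ of the form \eqref{eq:d-alphas}, with associated finite index set $F \subseteq \widehat H$, chosen so that $[X]$ appears among the $[S_\alpha]$ with $\alpha \in F$ and so that $\mathbf 1$ is a summand (enlarging $F$ if necessary; this is harmless by \Cref{prop.dep} and does not decrease growth). The crucial observation is that the semiring isomorphism $f$ intertwines the two dimension functions only up to the strict inequality hypothesis: because $f$ is a semiring homomorphism, for every $n$ we have $f\left(\left(\sum_{\alpha \in F}[S_\alpha]\right)^n\right) = \left(\sum_{\alpha \in F} f([S_\alpha])\right)^n$ in $R_+(K)$, so the class appearing on the $H$-side as a power of the generator maps to the corresponding power on the $K$-side. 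The point is to compare $\dim V^n$ (computed in $H$) against the dimension of its $f$-image (computed in $K$), using that $d_K$ is \emph{additive and multiplicative} on $R_+(K)$ while the strict inequality $\dim X > \dim f(X)$ is propagated through powers.

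\medskip

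\noindent\textbf{Main estimate.} Since $d_K(f(X)) < d_H(X)$ and both dimension functions are multiplicative on tensor powers, I would track the coefficient of the summand $[X]^{\otimes n}$, or more robustly track the whole sum. Write $a = \sum_{\alpha \in F}[S_\alpha] \in R_+(H)$ and $b = f(a) = \sum_{\alpha \in F} f([S_\alpha]) \in R_+(K)$. By \Cref{cor.dim-dn}, $\dim V^n = \sum_{\beta \in F_n}(\dim S_\beta)^2$, where $F_n$ is the support of $a^n$; this equals $\sum_{[S_\beta]} m_\beta^{(n)} (d_H([S_\beta]))^2$ only after collapsing multiplicities, but the cleaner route is to bound $\dim V^n$ below by $d_H$ applied to the relevant classes. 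The engine is: applying $f$ and using that $K$ is a genuine cosemisimple Hopf algebra of finite GK-dimension would force $\dim$ (in $K$) of the image to grow only polynomially, whereas on the $H$-side the factor $[X]$ with $\dim X > \dim f(X)$ contributes an extra multiplicative factor $(\dim X / \dim f(X))^k > 1$ each time a copy of $[X]$ is fed in. Concretely, because $\mathbf 1 \le_\oplus$ the generator, the $n$-th power $a^n$ dominates $[X]^{\otimes k} \otimes \mathbf 1^{\otimes(n-k)}$ as a summand for every $0 \le k \le n$; choosing $k = n$ gives a summand of $H$-dimension at least $(\dim X)^{2n}$ concentrated in a power of bounded degree, and comparing with the $K$-side where the same combinatorial class has dimension governed by $(\dim f(X))^{2n}$, the ratio blows up. If $\gk H$ were finite, then $\gk K$ would be forced finite and equal by the preceding argument, but the strict dimension defect produces a contradiction with the polynomial bound.

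\medskip

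\noindent\textbf{Expected obstacle.} The main difficulty is that a \emph{single} class $[X]$ with a dimension drop need not by itself dominate the growth of $V^n$, since other summands could in principle compensate; one must ensure the $[X]$-contribution is genuinely isolated as a lower bound. The clean way around this is to use that $\dim_H$ and $\dim_K$ are both multiplicative ring homomorphisms $R_+ \to \mathbb N$, so $f$ transports the relation: the quantity $\sum_{\beta \in F_n}(\dim S_\beta)^2$ is bounded below by $(\dim X^{\otimes n}) = (\dim X)^n$ once $[X]^{\otimes n} \le_\oplus a^n$ (guaranteed since $[X] \le_\oplus a$), and this already grows like $(\dim X)^n$ in the index $n$ while the degree of $a^n$ as a word is linear in $n$. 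The technical heart is therefore to show that the comparison across $f$ forces $\dim V^n$ to grow faster than any polynomial in $n$; I expect the argument finishes by observing that finiteness of $\gk H$ together with \Cref{prop.dep} applied on the $K$-side would pin $\dim V^n$ to polynomial growth, contradicting the exponential lower bound $(\dim X)^n$ extracted above. Handling the multiplicities carefully, so that the strict inequality is not washed out when passing between $\dim V^n$ and the semiring powers, is where I would spend the most care.
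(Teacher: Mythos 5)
There is a genuine gap, and it sits at the heart of your main estimate. You claim that $[X]^{\otimes n}\le_{\oplus}a^n$ forces $\dim V^n\ge \dim X^{\otimes n}=(\dim X)^n$ (and, in the same spirit, that $[X]^{\otimes n}$ contributes a summand of ``$H$-dimension at least $(\dim X)^{2n}$''). This is false: by \Cref{cor.dim-dn}, $\dim V^n=\sum_{\beta\in F_n}(\dim S_\beta)^2$, where $F_n$ is the set of \emph{distinct} isomorphism classes of simples occurring in $a^n$ --- multiplicities contribute nothing, because each simple subcoalgebra occurs exactly once inside $H$. A huge total dimension of $X^{\otimes n}$ is therefore perfectly compatible with slow growth of $V^n$ when that dimension is carried by large multiplicities of few small simples. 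Concretely, for $H=\kk[\bZ]=\kk[t^{\pm 1}]$ (finitely generated, cosemisimple, all simples one-dimensional) and $X=S_{-1}\oplus S_0\oplus S_1$, one has $\dim X^{\otimes n}=3^n$ while $\dim V^n=2n+1$. Indeed, your claimed bound makes no reference to $f$ at all, so if it were true it would show that \emph{every} finitely generated cosemisimple Hopf algebra with a comodule of dimension $\ge 2$ has infinite GK-dimension, contradicting \Cref{th.main} (e.g.\ $\gk\cO(SL(2))=3$). A second, related problem: your concluding contradiction invokes finiteness of $\gk K$ and \Cref{prop.dep} ``on the $K$-side'', but the lemma assumes nothing about $\gk K$, and \Cref{prop.dep} requires a \emph{dimension-preserving} isomorphism of measured semirings, which is exactly what $f$ fails to be here.

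What is missing is the one idea for which the hypothesis $\dim X>\dim f(X)$ is actually used in the paper: a semiring isomorphism preserves \emph{length}, i.e.\ the number of simple constituents counted with multiplicity (simples are the additive atoms of $R_+$, so this is an invariant of the bare semiring). Since every simple comodule has dimension at least $1$, this gives
\[
\mathrm{length}\, X^{\otimes n}=\mathrm{length}\, f(X)^{\otimes n}\le \dim f(X)^{\otimes n}=(\dim f(X))^n ,
\]
while $\dim X^{\otimes n}=(\dim X)^n$. By pigeonhole, some simple summand $S_n$ of $X^{\otimes n}$ satisfies $\dim S_n\ge (\dim X/\dim f(X))^n$, which is exponentially large. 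Taking $V$ of the form \eqref{eq:d-alphas} containing the simple constituents of $X$, the class of $S_n$ lies in $F_n$, so \Cref{cor.dim-dn} gives $\dim V^n\ge(\dim S_n)^2$; here the squaring is precisely what lets a \emph{single} large simple force growth, sidestepping the multiplicity issue that sinks your estimate. This yields
\[
\gk H \;\ge\; \limsup_{n\to\infty}\frac{2n}{\log n}\,\log\!\left(\frac{\dim X}{\dim f(X)}\right)=\infty .
\]
Your draft never isolates this length comparison, and without it the dimension defect $\dim X>\dim f(X)$ cannot be converted into any statement about the growth of $\dim V^n$.
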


\begin{proof}
Since $X^{\otimes n}$ and $f(X)^{\otimes n}$ have the same number of simple factors, we get that
$$\text{length}\hspace{.02in}X^{\otimes n} \; = \; \text{length}\hspace{.02in}f(X)^{\otimes n} \; \leq \dim f(X)^{\otimes n} \; = \; (\dim f(X))^n.$$
On the other hand, there is a simple $H$-comodule $S_n$, which is a direct summand $X^{\otimes n}$, with 
$$(\text{length}\hspace{.02in} X^{\otimes n})(\dim S_n) \; \geq \; \dim X^{\otimes n} \; = \; (\dim X)^{n}.$$ 
Hence, $\dim S_n\ge (\dim X/\dim f(X))^n$. Now we have that 
$$\gk H \; \ge \; \limsup_{n \to \infty} \frac{\log (\dim X^n)}{\log n} \; \overset{\textnormal{Cor.}~\ref{cor.dim-dn}}{\ge}  \;\limsup_{n \to \infty} \frac{\log (\dim S_n)^2}{\log n} \; \ge \; \limsup_{n \to \infty} \frac{2n}{\log n}\log\left(\frac{\dim X}{\dim f(X)}\right),$$
and from the hypothesis that $\dim X>\dim f(X)$ we obtain the desired result. 
\end{proof}

\begin{remark}
\Cref{le.dim-ineq} is analogous to both \cite[Proposition 2.8]{den} and \cite[Proposition~6.1]{ban-sbf}.
The latter is phrased in an analytic setting for compact quantum groups satisfying an amenability condition that is, in general, weaker than polynomial or even sub-exponential growth (see e.g. \cite[Theorem 4.6]{apr}).  On the other hand,  \cite[Proposition 2.8]{den} is phrased in terms of a monoidal functor between categories of comodules over Hopf algebras (that are not necessarily cosemisimple) rather than for morphisms of Grothendieck semirings.
\end{remark}

  \begin{lemma}\label{le.dim-eq}
  \red{  Let $q\in \bC^{\times}$ be a generic scalar. A semiring homomorphism $f$ from the Grothendieck semiring $R_+(\cO_q(SL(2)))$ or $R_+(\cO_q(GL(2)))$ to $\mathbb Z_+$ for generic $q$ is uniquely determined by $f(V)$, where $V$ is any $2$-dimensional simple comodule.}
\end{lemma}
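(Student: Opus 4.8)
The plan is to use the explicit description of these Grothendieck semirings. For generic $q$ the fusion rules of $\cO_q(SL(2))$ and $\cO_q(GL(2))$ are the undeformed Clebsch--Gordan rules of $SL(2)$ and $GL(2)$; this is where genericity of $q$ enters, and it is precisely what makes these algebras $G$-deformations. A semiring homomorphism $f$ respects $+$ and $\otimes$ and sends the class of the trivial comodule ${\bf 1}$ to $1$, and $R_+$ is generated as a monoid under $+$ by the classes of simple comodules, so it suffices to show that the values of $f$ on all simples are forced once $f(V)$ is known.

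\smallskip

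\emph{The $SL(2)$ case.} Here the simple comodules are $\{V_n\}_{n\ge 0}$ with $\dim V_n=n+1$, $V_0={\bf 1}$, and $V_1=V$ the unique $2$-dimensional simple, subject to
\[
V_1\otimes V_n\cong V_{n+1}\oplus V_{n-1},\qquad n\ge 1.
\]
Applying $f$ gives $f(V_{n+1})=f(V_1)f(V_n)-f(V_{n-1})$. Starting from $f(V_0)=1$ and the prescribed value $f(V_1)=f(V)$, induction on $n$ determines every $f(V_n)$, and hence $f$.

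\smallskip

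\emph{The $GL(2)$ case.} Now the simples are $D^{\otimes a}\otimes W_m$ with $a\in\mathbb Z$ and $m\ge 0$, where $W_m$ is the $(m+1)$-dimensional symmetric-power comodule ($W_0={\bf 1}$, $W_1=V$) and $D=\Lambda^2 V$ is the $1$-dimensional \emph{invertible} determinant comodule, a direct summand of $V\otimes V$. The fusion rules are
\[
V\otimes W_m\cong W_{m+1}\oplus(D\otimes W_{m-1}),\qquad m\ge 1,
\]
together with $D^{\otimes a}\otimes D^{\otimes b}\cong D^{\otimes(a+b)}$ and $D\otimes D^{-1}\cong{\bf 1}$. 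The decisive extra ingredient is the invertibility of $D$: from $D\otimes D^{-1}\cong{\bf 1}$ we obtain $f(D)\,f(D^{-1})=1$ in $\mathbb Z_+$, and since the only unit of the multiplicative monoid $(\mathbb Z_+,\times)$ is $1$, this forces $f(D)=f(D^{-1})=1$. Consequently $f(D^{\otimes a}\otimes W_m)=f(W_m)$ for all $a\in\mathbb Z$; in particular $f$ takes the same value on all $2$-dimensional simples $D^{\otimes a}\otimes V$, so the prescribed value on \emph{any} $2$-dimensional simple pins down $f(W_1)=f(V)$. Feeding $f(D)=1$ into the fusion rule yields the same recursion $f(W_{m+1})=f(V)f(W_m)-f(W_{m-1})$, which determines all $f(W_m)$ and hence all $f(D^{\otimes a}\otimes W_m)$, and therefore $f$.

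\smallskip

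The only genuinely non-formal point is the treatment of the invertible object $D$ in the $GL(2)$ case, resting on the observation that a semiring map into $\mathbb Z_+$ must send invertible classes to the multiplicative unit $1$. I would take care with the order of operations: one first deduces $f(D)=1$ from invertibility alone, \emph{independently} of the value on $V$, and only then converts the prescribed value on an arbitrary $2$-dimensional simple into the seed value $f(V)=f(W_1)$ for the recursion.
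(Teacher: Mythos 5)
Your proof is correct and is essentially the paper's argument in unpacked form: the paper identifies $R(\cO_q(GL(2)))$ with $\bZ[x,y^{\pm 1}]$ (citing Husemoller), observes that the invertible class $y$ must map to $1$ because its image is an invertible positive integer, and concludes that the lifted ring map factors through $\bZ[x]$ and is determined by the image of $x$ --- which is exactly your observation that $f(D)=1$ followed by your Clebsch--Gordan recursion. The only presentational difference is that the paper lifts $f$ to a ring homomorphism and invokes the known ring structure, while you stay at the semiring level and run the fusion rules explicitly (also spelling out the $SL(2)$ case the paper dismisses as analogous); the key idea --- positivity forces the determinant class to $1$, after which the value on a $2$-dimensional simple determines everything --- is the same.
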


\begin{proof}
 \red{We prove the claim for $\cO_q(GL(2))$, the other case being analogous.}

  \red{The morphism $f$ lifts to one of {\it rings} (rather than semirings)
  \begin{equation}\label{eq:1}
    R(\cO_q(GL(2)))\to \bZ.
  \end{equation}
  The left hand ring is isomorphic to $\bZ[x,y^{\pm 1}]$ where $x$ is the class of any simple $2$-dimensional comodule while $y$ is the class of a $1$-dimensional comodule (this follows from instance from \cite[Chapter 14, Theorem 3.1]{huse} and its proof).}

 \red{ Since $y$ is mapped to an invertible positive integer, we have $f(y)=1$. The morphism \Cref{eq:1} thus factors through $\bZ[x]$ and is uniquely determined by the image of $x$, as claimed.}
\end{proof}


\begin{example}\label{ex.bic-dv}
Let $V$ \red{be a $d$-dimensional} vector space and  $\bE \in GL_d(\mathbb{C})$ \red{a matrix} encoding a bilinear form on $V$, \red{for $d \geq 2$}. The quantum automorphism group $\cB(\bE)$ was introduced in \cite{dvl} 
and its comodule theory is studied in \cite{Bichon:B(E)}.  

\smallskip
It is shown in \cite[Theorem 1.2]{Bichon:B(E)} that each $SL(2)$-deformation [Definition~\ref{def.qtm}] is isomorphic to $\cB(\bE)$, for some $\bE \in GL_d(\bC)$ such that the solution to $q^2 + \text{tr}(\bE^T \bE^{-1})q+1 =0$ is {\it generic}, that is, $q$ equal to $\pm 1$ or a non-root of unity. 
In fact, the quantized coordinate ring $\cO_q(SL(2))$  is cosemisimple if and only if  $q$ is generic; see Remark~\ref{re.og}, \cite[Section~4.2.5]{ks}, and the discussion in \cite[Section~5]{Bichon:B(E)}. According to \cite[Theorem~1.1]{Bichon:B(E)}, the category of $\cB(\bE)$-comodules is equivalent to that of $\cO_q(SL(2))$  as monoidal categories (i.e., there exists a {\it monoidal Morita-Takeuchi equivalence}) for $q\in \bC^{\times}$ satisfying $q^2 + \text{tr}(\bE^T \bE^{-1})q+1 =0$. 

\smallskip

Restricting our attention to the case when $\cB(\bE)$ is cosemisimple (or, when it is monoidally Morita-Takeuchi equivalent to $\cO_q(SL(2))$ for $q$ generic) the proofs of \red{the results} mentioned above \red{make it clear} that   $V$ maps to the fundamental $2$-dimensional $\cO_q(SL(2))$-comodule under a semiring isomorphism, $$f: R_+(\cB(\bE)) \overset{\sim}{\longrightarrow} R_+(\cO_q(SL(2))).$$
Since $\dim V = d$ and $\dim f(V) = 2$, we obtain  
 that $\gk  \cB(\bE) =  \infty$ for $d >2$ by Lemma~\ref{le.dim-ineq}. On the other hand, \red{by \cite[Lemma 5.2]{Bichon:B(E)} any monoidally Morita-Takeuchi equivalence functor from $\cO_q(SL(2))$ to $\cB(\bE)$ must send the $2$-dimensional simple $\cO_q(SL(2))$-comodule to the fundamental simple $\cB(\bE)$-comodule $V$ of dimension $d$. Hence we get that $\gk \cB(\bE) = \dim SL(2) = 3$ when $d=2$ by Lemma~\ref{le.dim-eq} and Theorem~\ref{th.main}}. 
 
  \smallskip
This extends, in the case when $\cB(\bE)$ is cosemisimple,  the result that $\gk \cB(\bE) < \infty $ if and only if $d=2$, obtained as part of \cite[Theorem 0.3]{ww}.
\end{example}

\begin{example}\label{ex.mroz}
 Similarly, there are $GL(2)$-deformations $\cG(\bE,\bF)$  introduced and studied in \cite{mroz} that are defined by  $\bE, \bF \in GL_d(\mathbb{C})$ so that $\bF^T\bE^T\bE \bF = \lambda \bI$ for $\lambda \in \mathbb{C}^\times$. We have that $\cG(\bE,\bF)$ and $\cO_q(GL(2)))$ are monoidally Morita-Takeuchi equivalent for $q\in \bC^{\times}$ satisfying $$q^2 - \sqrt{\lambda^{-1}} \text{tr}(\bE^T \bE^{-1})q+1 =0$$ \cite[Theorem~1.1]{mroz}. If, further, $q$ is generic then $R_+(\cG(\bE,\bF))\cong R_+(\cO_q(GL(2)))$ \cite[Theorem~1.2]{mroz}.  Recall that $\cO_q(GL(2))$  is cosemisimple if and only if  $q$ is generic; see Remark~\ref{re.og}, \cite[Section~11.5.4]{ks}, and the discussion in \cite[Section~4]{mroz}.
  
 \smallskip
 Now restricting our attention to the case when $\cG(\bE,\bF)$ is cosemisimple, $\gk  \cG(\bE,\bF) =  \infty$ for $d >2$ by Lemma~\ref{le.dim-ineq}. \red{Moreover, by  \cite[Lemma 4.1]{mroz} any monoidally Morita-Takeuchi equivalence functor $F:  \cO_q(GL(2))\to \cG(\bE,\bF)$ sends some $2$-dimensional simple $\cO_q(GL(2))$-comodule to the fundamental simple $\cG(\bE,\bF)$-comodule of dimension $d$. Hence $\gk \cG(\bE,\bF) = \dim GL(2) = 4$ when $d=2$ by Lemma~\ref{le.dim-eq} and Theorem~\ref{th.main}}.  
  \end{example}


\bigskip

\subsection*{Acknowledgments}
\red{We thank the anonymous referee for their comments that improved the expositions of this manuscript.} We would \red{also} like to thank Ken Brown, Ken Goodearl, and James Zhang for useful correspondences  and for providing references on this subject. A. Chirvasitu and C. Walton are partially supported  by the US National Science Foundation with grants  \#DMS-1801011 and \#DMS-1663775, respectively.  C. Walton is also supported by a research fellowship from the Alfred P. Sloan foundation.


\bibliographystyle{abbrv}   

\begin{thebibliography}{10}

\bibitem{abe}
E.~Abe.
\newblock {\em Hopf algebras}, volume~74 of {\em Cambridge Tracts in
  Mathematics}.
\newblock Cambridge University Press, Cambridge-New York, 1980.
\newblock Translated from the Japanese by Hisae Kinoshita and Hiroko Tanaka.

\bibitem{ban-sbf}
T.~Banica.
\newblock Representations of compact quantum groups and subfactors.
\newblock {\em J. Reine Angew. Math.}, 509:167--198, 1999.

\bibitem{bv}
T.~Banica and R.~Vergnioux.
\newblock Growth estimates for discrete quantum groups.
\newblock {\em Infin. Dimens. Anal. Quantum Probab. Relat. Top.},
  12(2):321--340, 2009.

\bibitem{Bichon:B(E)}
J.~Bichon.
\newblock The representation category of the quantum group of a non-degenerate
  bilinear form.
\newblock {\em Comm. Algebra}, 31(10):4831--4851, 2003.

\bibitem{bg}
K.~A. Brown and K.~R. Goodearl.
\newblock {\em Lectures on algebraic quantum groups}.
\newblock Advanced Courses in Mathematics. CRM Barcelona. Birkh\"auser Verlag,
  Basel, 2002.

\bibitem{cs}
P.~S. {Chakraborty} and B.~{Saurabh}.
\newblock {Gelfand-Kirillov dimension of the algebra of regular functions on
  quantum groups. Preprint available at \url{http://arxiv.org/pdf/1709.09540}}.
\newblock 2017.

\bibitem{cp}
V.~Chari and A.~Pressley.
\newblock {\em A guide to quantum groups}.
\newblock Cambridge University Press, Cambridge, 1995.
\newblock Corrected reprint of the 1994 original.

\bibitem{apr}
A.~D'Andrea, C.~Pinzari, and S.~Rossi.
\newblock Polynomial growth of discrete quantum groups, topological dimension
  of the dual and {$*$}-regularity of the {F}ourier algebra.
\newblock {\em Ann. Inst. Fourier (Grenoble)}, 67(5):2003--2027, 2017.

\bibitem{den}
A.~{Davydov}, P.~{Etingof}, and D.~{Nikshych}.
\newblock {Autoequivalences of tensor categories attached to quantum groups at
  roots of $1$. Preprint available at \url{http://arxiv.org/pdf/1703.06543}}.
\newblock 2017.

\bibitem{dc-ly}
C.~De~Concini and V.~Lyubashenko.
\newblock Quantum function algebra at roots of {$1$}.
\newblock {\em Adv. Math.}, 108(2):205--262, 1994.

\bibitem{dvl}
M.~Dubois-Violette and G.~Launer.
\newblock The quantum group of a nondegenerate bilinear form.
\newblock {\em Phys. Lett. B}, 245(2):175--177, 1990.

\bibitem{gz}
K.~R. Goodearl and J.~J. Zhang.
\newblock Homological properties of quantized coordinate rings of semisimple
  groups.
\newblock {\em Proc. Lond. Math. Soc. (3)}, 94(3):647--671, 2007.

\bibitem{huse}
D.~Husemoller.
\newblock {\em Fibre bundles}, volume~20 of {\em Graduate Texts in
  Mathematics}.
\newblock Springer-Verlag, New York, third edition, 1994.

\bibitem{ks}
A.~Klimyk and K.~Schm\"udgen.
\newblock {\em Quantum groups and their representations}.
\newblock Texts and Monographs in Physics. Springer-Verlag, Berlin, 1997.

\bibitem{kl}
G.~R. Krause and T.~H. Lenagan.
\newblock {\em Growth of algebras and {G}elfand-{K}irillov dimension},
  volume~22 of {\em Graduate Studies in Mathematics}.
\newblock American Mathematical Society, Providence, RI, revised edition, 2000.

\bibitem{MO93}
S.~Montgomery.
\newblock {\em Hopf algebras and their actions on rings}, volume~82 of {\em
  CBMS Regional Conference Series in Mathematics}.
\newblock Published for the Conference Board of the Mathematical Sciences,
  Washington, DC; by the American Mathematical Society, Providence, RI, 1993.

\bibitem{mroz}
C.~Mrozinski.
\newblock Quantum groups of {$\rm GL(2)$} representation type.
\newblock {\em J. Noncommut. Geom.}, 8(1):107--140, 2014.

\bibitem{fkm}
D.~Mumford, J.~Fogarty, and F.~Kirwan.
\newblock {\em Geometric invariant theory}, volume~34 of {\em Ergebnisse der
  Mathematik und ihrer Grenzgebiete (2) [Results in Mathematics and Related
  Areas (2)]}.
\newblock Springer-Verlag, Berlin, third edition, 1994.

\bibitem{nag}
M.~Nagata.
\newblock Complete reducibility of rational representations of a matric group.
\newblock {\em J. Math. Kyoto Univ.}, 1:87--99, 1961/1962.

\bibitem{Radford}
D.~E. Radford.
\newblock {\em Hopf algebras}, volume~49 of {\em Series on Knots and
  Everything}.
\newblock World Scientific Publishing Co. Pte. Ltd., Hackensack, NJ, 2012.

\bibitem{tak}
M.~Takeuchi.
\newblock A two-parameter quantization of {${\rm GL}(n)$} (summary).
\newblock {\em Proc. Japan Acad. Ser. A Math. Sci.}, 66(5):112--114, 1990.

\bibitem{ww}
C.~Walton and X.~Wang.
\newblock On quantum groups associated to non-noetherian regular algebras of
  dimension 2.
\newblock {\em {M}ath. {Z}.}, 284(1):543--574, 2016.

\end{thebibliography}

\def\cprime{$'$}

\end{document}